\theoremstyle{plain}
\newtheorem{thm}{Theorem}[section]
\newtheorem{cor}[thm]{Corollary}
\newtheorem{prop}[thm]{Proposition}
\newtheorem{lem}[thm]{Lemma}
\theoremstyle{remark}
\newtheorem{remark}{\bf \quad \itshape  Remark}
\theoremstyle{plain}
\newtheorem{exam}[thm]{Example}
\theoremstyle{definition}
\newtheorem{defn}[thm]{Definition}
\renewcommand{\bar}{\overline}
\newcommand{\C}{{\mathbb{C}}}
\newcommand{\F}{{\mathbb{F}}}
\newcommand{\Q}{{\mathbb{Q}}}
\newcommand{\Z}{{\mathbb{Z}}}
\newcommand{\N}{{\mathbb{N}}}
\newcommand{\GL}{\mathrm{GL}}
\newcommand{\Spec}{\mathrm{Spec}\;}
\newcommand{\Alt}{{\raise 2pt\hbox{$\scriptstyle\bigwedge$}}}
\definecolor{myblue}{RGB}{80,80,160}
\definecolor{mygreen}{RGB}{80,160,80}
\newdimen\nodeSize
\newdimen\nodeDist
\tikzset{
	position/.style args={#1:#2 from #3}{
		at=(#3.#1), anchor=#1+180, shift=(#1:#2)
	}
}
\title{The Casas-Alvero conjecture in computational algebraic geometry}
\author{Zhipeng Lu}
\email{zhipeng.lu@uni-goettingen.de}
\begin{document}
	\maketitle
	\begin{abstract}
		We study varieties defined by parameterizing polynomials of derivatives through a computational algebro-geometric approach, especially relying on Combinatorial Nullstellensatz and Noether normalization. We establish that these polynomials form regular sequences easily. This allows us to calculate the dimension of thus defined varieties and consequently give a proof to the Casas-Alvero conjecture.
	\end{abstract}
	\section{Introduction}\label{section-set up}
	The Casas-Alvero conjecture, first asked in \cite{Casas-Alvero}, states that a complex polynomial $f$ having common roots with all its non-zero derivatives $f^{(i)}$ must be of the form $a(x-b)^n$. Since asked the conjecture has stood up with many attempts from different technical aspects and yet remains unproven. The current best partial result by Draisma-de Jong \cite{DJ} confirmed the conjecture for $p^en$, with $n\in\{1,2,3,4\}, e\geq 0$.  Note that it is not true over fields of finite characteristic since any $x^n-x^p$ for $n\geq 1)$ is a counterexample over $\bar{\F}_p$. However we will show that the problem can be dramatically resolved over finite fields $\F_p$.
	
	A natural idea on resolving the conjecture is by studying the resultants of $f$ and its derivatives. However the complexity of resultants makes detailed analysis unpractical. Instead we formulate the problem in a more intuitive algebro-geometric set up which dramatically reduces the problem to some computational or combinatorial considerations in commutative algebra straightforwardly described below. 
	
	If a degree $n\geq 1$ polynomial having common roots with all its $n-1$ derivatives, we call it a \textit{Casas-Alvero} polynomial. Any monic polynomial $f$ factors over $\C$ as
	\[f(x)=(x-x_1)\ldots(x-x_n).\]
	Then the Casas-Alvero condition actually defines an algebraic set in $A_\mathbb{C}^n$:
	\[CA_n:=Z(F_1,\ldots,F_{n-1}),\]
	in which
	\[F_i=\prod_{k=1}^{n}f^{(i)}(x_k)\in\mathbb{C}[x_1,\ldots,x_n], \forall i=1,\ldots,n-1.\]
	Equivalently, $CA_n$ gives a parameterization of degree $n$ monic Casas-Alvero polynomials by their roots.
	
	A simple yet key observation is, there would exist a two-dimensional linear subvariety $\{c(x_1,\ldots,x_n)+d(1,\ldots,1)|c,d\in\mathbb{C}\}\subset CA_n$, if $(x_1,\ldots,x_n)\in CA_n$ with not all coordinates identical. Hence if we can show $CA_n$ is of dimension one, then such points do not exist and the conjecture follows. Now we can equivalently formulate the conjecture as follows

	\begin{thm}\label{thm-over C} $\dim_\C{CA_n}=1, \forall n\geq 1$.
		\end{thm}    
    Since $F_i$'s with high degrees are cumbersome for computation, we study the branches defined by $\langle f^{(1)}(x_{i_1}),\cdots,f^{(n-1)}(x_{i_{n-1}})\rangle$. These branches as algebraic varieties, though should be of significant interest, seem not well studied in classical literature. In this paper we will try to describe some essential properties of these varieties. Especially we prove the Casas-Alvero conjecture by establishing the following:
    \begin{thm}\label{thm-general branches}
    	For $f=(x-x_1)(x-x_2)\cdots(x-x_n)$ as a polynomial in $K[x,x_1,\dots,x_n]$ for field $K=\C$ or of characteristic large enough ($\gg n!$), define the derivative polynomial $f^{(i)}(x_j)$ as a degree $n-i$ polynomial in $K[x_1,\dots,x_n]$ for any $0\leq i,j\leq n$. Then $\dim{Z(f^{(1)}(x_{i_1}),\dots,f^{(n-1)}(x_{i_{n-1}}))}=1$, for arbitrary $1\leq i_1,\dots,i_{n-1}\leq n$. 
    \end{thm}
    Moreover, the above result on branches of Casas-Alvero variety $CA_n$ can be generalized to varieties defined by $n-1$ arbitrary derivative polynomials.
    \begin{thm}\label{thm-arbitrary derivatives}
    	With notations as above, for any $n-1$ arbitrary distinct pairs $(i_k,j_k)$ with $1\leq i_{k}\leq n$ and $1\leq j_k\leq n-1$, $k=1,\dots,n-1$, $\dim{Z(f^{(j_1)}(x_{i_1}),\dots,f^{(j_{n-1})}(x_{i_{n-1}}))}=1$. 
    \end{thm}
     For example, $\dim Z(f^{(n-1)}(x_1),\dots,f^{(n-1)}(x_{n-1}))=1$ simply due to \[f^{(n-1)}(x_i)-f^{(n-1)}(x_j)=(n-1)(x_i-x_j), \forall 1\leq i,j\leq n.\]
     Since polynomial rings are Cohen-Macaulay, the above theorem is equivalent to the fact that any $n-1$ distinct derivative polynomials form a regular sequence. In the following sections, we will first examine some easy cases of Theorem \ref{thm-general branches} by Taylor expansions. Then to deal with general cases we introduce a model theoretic approach based on tools from computational algebraic geometry including finite Nullstellensatz,  combinatorial Nullstellensatz and some explicit forms of Noether normalization.
    \subsection*{Acknowledgement} The author is supported by Harald Helfgott's Humboldt Professorship.
    \section{General reductions and special cases}\label{section-reduction}
    We give evidence of Theorem \ref{thm-general branches} by proving the case of \textit{identical} branches as follows.
	\begin{lem}\label{lem-identical branch}For any $n\geq 1$, $1\leq k\leq n$ and a field $K$ with $char(K)=0$ or $char(K)>n!$, $\dim Z(I_k)=1$, where $I_k:=\langle f^{(1)}(x_k),\ldots,f^{(n-1)}(x_k)\rangle.$
	\end{lem}
	\begin{proof}
    By Taylor's expansion
    \begin{align*}f(x_j)=&f(x_k)+f^{(1)}(x_k)(x_j-x_k)+\dots\\
    &+\dfrac{1}{(n-1)!}f^{(n-1)}(x_k)(x_j-x_k)^{n-1}+\dfrac{n!}{n!}(x_j-x_k)^{n}.\end{align*}
    Then by $f(x_j)=f(x_k)=0$ we have
    \[(x_j-x_k)^{n}\in I_k,\text{ or }(x_j-x_k)\in \mathrm{rad}(I_k),\]
    where $\mathrm{rad}(I)$ denotes the radical ideal of $I$.
    Hence \[\mathrm{rad}(\langle f^{(1)}(x_k),\ldots,f^{(n-1)}(x_k)\rangle)=\langle x_k-x_1,\cdots,x_k-x_n\rangle\] and the proposition follows.
   \end{proof}
   Slightly generalizing the above method, we can prove that the branch defined by the ideal $I_{j,l,k}:=\langle f^{(1)}(x_{k}),\dots,f^{(j-1)}(x_k),f^{(j)}(x_l),f^{(j+1)}(x_k),\dots,f^{(n-1)}(x_k)\rangle$ always has dimension one for any $1\leq j\leq n-1$ and $1\leq k\neq l\leq n$. First, by Taylor's expansion,
   \begin{align}\label{equation-Taylor j}f^{(j)}(x_l)=&f^{(j)}(x_k)+\frac{1}{1!}f^{(j+1)}(x_k)(x_l-x_k)+\dots\\
   &+\frac{1}{(n-j-1)!}f^{(n-1)}(x_k)(x_l-x_k)^{n-j-1}+\frac{n!}{(n-j)!}(x_l-x_k)^{n-j}.\notag\end{align}
   We may kill the last term by combining it with the following expansion
   \begin{align}\label{equation-Taylor 0}0=f(x_l)=&f(x_k)(=0)+f^{(1)}(x_k)(x_l-x_k)+\dots\\
   &+\dfrac{1}{(n-1)!}f^{(n-1)}(x_k)(x_l-x_k)^{n-1}+\dfrac{n!}{n!}(x_l-x_k)^{n}.\notag\end{align}
   Subtracting (\ref{equation-Taylor 0}) multiplied by $\frac{n!}{(n-j)!}(x_l-x_k)^{j}$ from (\ref{equation-Taylor j}) gives
   \[
   \frac{n!}{(n-j)!}(x_l-x_k)^{j}f^{(j)}(x_k)\in I_{j,l,k}.\]
   For any prime ideal $p\supset I_{j,l,k}$ we have either $f^{(j)}(x_k)\in p$ or $x_l-x_k\in p$. If the latter happens, then $f^{(j)}(x_k)\sim f^{(j)}(x_l)\sim 0\mod{p}$. Thus $f^{(j)}(x_k)\in p$ anyway, so that $f^{(j)}(x_k)\in\cap_{p\supset I_{j,l,k}}p=rad(I_{j,l,k})$, the radical ideal of $I_{j,l,k}$. This proves the following
   \begin{cor}\label{cor-branches with one index different} With the notations above,
   	 $Z(I_{j,l,k})=Z(I_k)$.
   \end{cor} 
   
   However, the same method applied to general branches does not directly give results as well. For a general branch defined by $\langle  f^{(1)}(x_{i_1}),\cdots,f^{(n-1)}(x_{i_{n-1}})\rangle$, we can write each $f^{(j)}(x_{i_j})$ as of (\ref{equation-Taylor j}) and kill the last terms by subtracting (\ref{equation-Taylor 0}) similarly, so that we get a system of $n-1$ equations with $(x_{i_j}-x_k)^{j}f^{(j)}(x_{i_j})$ on the left hand side and expansions involving $f^{(1)}(x_k),\dots,f^{(n-1)}(x_k)$ on the right for any chosen $1\leq k\leq n$. Then by Gauss elimination over $\C[x_1,\dots,x_n]$ we get \[F_{j}f^{(j)}(x_{k})\in\langle  f^{(1)}(x_{i_1}),\cdots,f^{(n-1)}(x_{i_{n-1}})\rangle,\ \forall 1\leq j\leq n-1,\]
   where $F_{j}$ is a polynomial in $(x_k-x_{i_1}),\dots,(x_k-x_{i_{n-1}})$. If there are at least three distinct indexes among $i_1,\dots,i_{n-1}$, we can not conclude that $f^{(j)}(x_k)$ all belong to the radical of $\langle  f^{(1)}(x_{i_1}),\cdots,f^{(n-1)}(x_{i_{n-1}})\rangle$ as we did in the proof of Corollary \ref{cor-branches with one index different}. For instance, if all $i_j$'s are distinct, then each $F_j$ is a product of all $(x_k-x_{i_1}),\dots,(x_k-x_{i_{n-1}})$ (with powers), from which we can only conclude that $(x_k-x_{i_l})$ belongs to the radical for some $i_l$.
   
   This prompts us to introduce new methods to deal with general branches. We start by making the first reduction using the Lang-Weil bound and a form of local-global principle.
   \begin{prop}
   	Suppose for any $n\geq 1$ and large enough prime $p\gg n$ we have, over $\F_p$, $Z(f^{(1)}(x_{i_1}),\cdots,f^{(n-1)}(x_{i_{n-1}}))$ is of size $p$, for any branch. Then Theorem \ref{thm-general branches} (hence the Casas-Alvero conjecture) holds for $n$ and vice versa.
   \end{prop}
   	\begin{proof}
   		First, clearly $CA_n$ is defined over any finite field $\F_p$. Then viewed as a variety over $\bar{\F}_p$, we have by Lang-Weil bound (see Corollary 4 of \cite{Tao}),
   		\[|CA_n(\F_p)|=(c(CA_n(\bar{\F}_p))+O(p^{-1/2}))p^{\dim(CA_n(\F_p))},\]
   		where $c$ is the number of top-dimensional components of $CA_n$ and $\dim$ denotes for Krull dimension. By hypothesis of the proposition, we have $|CA_n(\F_p)|=p$ for all large enough $p$. Then \[c(CA_n(\bar{\F}_p))=1, \text{ and } \dim(CA_n(\bar{\F}_p)=1.\]
   		Particularly $CA_n$ is irreducible.
   		
   		Second, we look at the structure morphism $\pi: CA_n(\Z)\longrightarrow \Spec\Z$, which is clearly of finite presentation. Since we know that 
   		\[\{p\in\Spec\Z\mid \dim CA_n(\F_p)=1\}\]
   		is an open set in $\Spec\Z$ hence contains the generic point $0$, i.e. $\dim CA_n(\Q)=1$. Then by Proposition 2.7 in Chapter 3 of \cite {Liuqing}, $\dim CA_n(\bar{\Q})=1$. Further by Lefschetz principle (see \cite{BE} for reference), $\dim CA_n(\C)=1$.
   		
   		Conversely, if the Casas-Alvero conjecture stands, then
   		\[1=\dim CA_n(\Q)=\dim CA_n(\F_p),\]
   		for all but finite primes $p$. Hence $CA_n(\F_p)$ is a line and $|CA_n(\F_p)|=p$.
   	\end{proof} 
    
    The above reduction may be also stated in a lame language as follows. First, we may only need to prove it over $\Q$, because if otherwise $\dim_{\Q}(CA_n)>1$ then similarly by Proposition 2.7 in Chapter 3 of \cite{Liuqing} $\dim_{\C}(CA_n)>1$. Then essentially we need only to prove it over $\Z$, because if $f(x)=(x-x_1)\cdots(x-x_n)$ with all $x_i\in\Q$, by multiplying the least common multiple of the denominators, we may assume that the roots are all integers. Hence the conjecture is equivalent to for any branch
    \begin{prop}\label{prop-over Z}
    	For any branch $Z(f^{(1)}(x_{i_1}),\cdots,f^{(n-1)}(x_{i_{n-1}}))$ over $\C$,
    	\[Z(f^{(1)}(x_{i_1}),\cdots,f^{(n-1)}(x_{i_{n-1}}))\cap\Z^n=\{(a,\cdots,a)\mid a\in\Z\}.\]	
    \end{prop}
    Then we can further reduce it to modulo primes $p$, or even any finite integers as follows.
    \begin{prop}\label{prop-over finite integers}
    	If for any Casas-Alvero polynomial $f(x)=(x-x_1)\cdots(x-x_n)$ with $x_i\in\Z$, there is some integer $m\geq 2$ such that $x_1\equiv x_2\equiv\cdots\equiv x_n\ (mod\ m)$, then Theorem \ref{thm-general branches} (hence the Casas-Alvero conjecture) holds for $n$.
    \end{prop}
    \begin{proof}
    	Suppose $x_i$ are not all equal. Translating by adding an identical integer on each coordinate, we may assume $x_i$'s to be non-negative. By the hypothesis we have $x_1\equiv x_2\equiv\cdots\equiv x_n\equiv l\ (mod\ m)$ for some $m\geq 2$ and $0\leq l<m$. Let $x_{i,1}=(x_i-l)/m,\forall i=1,\cdots,n$, then $f_1(x):=(x-x_{i,1})\cdots(x-x_{n,1})$ is again a degree $n$ Casas-Alvero polynomial having $n$ integer roots not all equal. Then again we have some $m_1\geq 2$ such that $x_{1,1}\equiv x_{2,1}\equiv\cdots\equiv x_{n,1}\equiv l_1\ (mod\ m_1)$ for some $m_1\geq 2$ and $0\leq l_1<m_1$ and we can do the similar affine transform to get another degree $n$ Casas-Alvero polynomial having $n$ integer roots not all equal. Clearly, this process gives an infinite descent for the integers $x_1,\cdots,x_n$, which is impossible for finite non-negative integers. This contradiction implies the conjecture.
    \end{proof}    
   	
   	In fact, we will prove for any $n\geq 1$, all large enough prime $p$ make the hypothesis of Proposition \ref{prop-over finite integers} valid. This is done in section \ref{section-Casas-Alvero over Q} based on further computational algebro-geometric reduction.

   \section{Standard monomials, finite Nullstellensatz and Noether normalization}\label{section-comuptational notations}
   This section contributes to introducing some necessary computational notions and results. We first define a standard order on monomials.
    \begin{defn}[Lexicographic order]
    	$\alpha>_{lex}\beta$ if the leftmost nonzero entry of $\alpha-\beta$ is positive, for any $\alpha=(\alpha_1,\cdots,\alpha_n), \beta=(\beta_1,\cdots,\beta_n)\in\N^n$.
    \end{defn}
    \begin{defn}[Graded lexicographic order]
    	Let $\alpha,\beta\in\N^n$. $\alpha>_{grlex}\beta$ if
    	\[\sum_{i=1}^n\alpha_i>\sum_{i=1}^{n}\beta_i, or\ \sum_{i=1}^n\alpha_i>\sum_{i=1}^{n}\ and\ \alpha>_{lex}\beta.\]N
    \end{defn}
    \begin{defn}
    	We define a \textit{monomial order} on the set of monomials $T=\{x_1^{\alpha_1}\cdots x_n^{\alpha_n}\mid \alpha_i\in\N\}\subset k[x_1,\cdots,x_n]$ for any field $k$ by
    	\[x^{\alpha}>x^{\beta} if\ \alpha>_{grlex}\beta,\ \forall \alpha,\beta\in\N^n.\]
    \end{defn}
    It is a \textit{total well-ordering} on $T$ satisfying
    
    (1) $1\leq t, \forall t\in T$;
    
    (2) $t_1\cdot s\leq t_2\cdot s,\forall t_1,t_2,s\in T$ if $t_1\leq t_2$.

    \begin{defn}[Leading coefficient, monomial and term]
    	Let $f=\sum_{\alpha}a_\alpha x^{\alpha}$ be a nonzero polynomial in $k[x_1,\cdots,x_n]$ and $>$ the monomial order as above. The \textit{multidegree} of f is defined as
    	\[multideg(f)=\max_{>}\{\alpha\in\N^n\mid a_\alpha\neq 0\}.\]
    	Then the \textit{leading coefficient} of $f$ is $LC(f)=a_{multideg(f)}$, the \textit{leading monomial} is $LM(f)=x^{multdeg(f)}$ and the \textit{leading term} of $f$ is $LT(f)=LC(f)\cdot LM(f)$.
    \end{defn}
    \begin{defn}[Ideal of leading monomials, leading terms]
    	Let $I$ be an ideal in $k[x_1,\cdots,x_n]$ and fix the monomial order on $T$. The ideal of leading monomials of $I$, $\langle LM(I)\rangle$, is the ideal generated by the leading monomials of all polynomials in $I$. The ideal of leading terms of $I$, $\langle LT(I)\rangle$, is the ideal generated by the leading terms of all polynomials in $I$.
    \end{defn}
    \begin{prop}[Multivariate division principle]
    	For a fixed monomial order and polynomials $g_1,\cdots,g_k$ in $k[x_1,\cdots,x_n]$, any $g\in k[x_1,\cdots,x_n]$ can be written as
    	\[g=a_1g_1+\cdots+a_kg_k+r\]
    	where $a_i,r\in k[x_1,\cdots,x_n]$ and either $r=0$ or $r$ is a linear combination of monomials not divisible by any of $LT(g_1),\cdots,LT(g_k)$.
    \end{prop}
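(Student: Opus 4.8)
The plan is to run the familiar multivariate division algorithm and to verify that it terminates, the termination being the only point that is not pure bookkeeping. Initialize $p := g$, $a_i := 0$ for $i=1,\cdots,k$, and $r := 0$; throughout we maintain the invariant
\[ g = a_1 g_1 + \cdots + a_k g_k + p + r. \]
At a stage with $p \neq 0$, inspect $LT(p)$. If some $LT(g_i)$ divides $LT(p)$, fix the least such $i$, set $t := LT(p)/LT(g_i)$, and replace $p$ by $p - t g_i$ and $a_i$ by $a_i + t$, leaving the other $a_\ell$ and $r$ unchanged. If no $LT(g_i)$ divides $LT(p)$, instead replace $p$ by $p - LT(p)$ and $r$ by $r + LT(p)$, leaving all the $a_i$ unchanged. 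In both cases the invariant is preserved: it is immediate in the remainder step, and in the reduction step it holds because, by property (2) of the monomial order, $LT(t g_i) = t\, LT(g_i) = LT(p)$, so the term $LT(p)$ cancels in $p - t g_i$. When the process reaches $p = 0$ we stop; the invariant then reads $g = a_1 g_1 + \cdots + a_k g_k + r$, the asserted decomposition.

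For termination, note that each step strictly decreases the leading monomial of $p$ in the monomial order. In a remainder step this is clear, since we delete the current leading monomial $LM(p)$. In a reduction step, $LM(t g_i) = LM(p)$, so the leading terms cancel in $p - t g_i$, while every other monomial appearing in $p$ or in $t g_i$ is strictly below $LM(p)$; hence either $p - t g_i = 0$ or its leading monomial lies strictly below $LM(p)$. If the algorithm never halted, the successive values of $LM(p)$ would form an infinite strictly decreasing chain in $T$, contradicting the fact recorded just after the definition of the monomial order that $>$ is a well-ordering on $T$. So the procedure terminates after finitely many steps.

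Finally, the only monomials that ever enter $r$ are leading monomials $LM(p)$ arising exactly at stages where no $LT(g_i)$ divides $LT(p)$, equivalently where no $LM(g_i)$ divides $LM(p)$ (the leading coefficients being units in $k$). Hence each monomial occurring in the output $r$ is divisible by none of $LT(g_1),\cdots,LT(g_k)$, so $r$ is either $0$ or a $k$-linear combination of monomials with this property, as required. I expect no genuine obstacle beyond invoking the well-ordering property at the termination step; a minor caveat is that the decomposition is not unique — the output $a_i$ and $r$ depend on the choices of which $g_i$ to reduce by — but every run of the procedure yields a decomposition of the claimed form, which is all that is asserted.
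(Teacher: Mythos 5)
Your proof is correct and is the standard division-algorithm argument: maintain the invariant $g=\sum a_i g_i + p + r$, reduce $LT(p)$ whenever some $LT(g_i)$ divides it, otherwise move $LT(p)$ into $r$, and appeal to the well-ordering of the monomial order for termination. Note, for calibration, that the paper does not actually prove this proposition — it is stated without proof and attributed to the cited thesis of Gao — so there is no paper argument to compare against; your write-up supplies the standard proof the paper implicitly relies on, and it is sound.
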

    Now we are ready to introduce
    \begin{defn}[Standard Monomials]
    	The set of \textit{standard monomials} of any ideal $J$ is
    	\[SM(J)=\{x^\alpha\mid x^\alpha\notin \langle LM(J)\rangle\}.\]      	
    \end{defn}
    Usually standard monomials are defined together with a Gr\"{o}bner basis but we do not need such notion in our later application. We need the following results over finite fields.
    \begin{prop}[Nullstellensatz over finite fields]\label{prop-finite nullstellensatz}
    	For any ideal $J\subset \F_q[x_1,\cdots,x_n]$, its radical ideal is
    	\[\sqrt{J}=J+\langle x_1^q-x_1,\cdots,x_n^q-x_n\rangle.\]  	
    \end{prop}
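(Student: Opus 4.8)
The approach is to identify the finite quotient ring $\F_q[x_1,\dots,x_n]/\langle x_1^q-x_1,\dots,x_n^q-x_n\rangle$ with the ring $\prod_{a\in\F_q^n}\F_q$ of all $\F_q$-valued functions on $\F_q^n$, after which the desired equality drops out of the elementary ideal theory of a finite product of fields. One inclusion is free: every $g\in J$ vanishes on $V(J)$ by definition of $I$ and $V$, and each $x_i^q-x_i$ vanishes on all of $\F_q^n\supseteq V(J)$ because $a^q=a$ for every $a\in\F_q$; since $I(V(J))$ is an ideal, $J+\langle x_1^q-x_1,\dots,x_n^q-x_n\rangle\subseteq I(V(J))$.

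For the reverse inclusion I would introduce the evaluation homomorphism
\[
\phi\colon \F_q[x_1,\dots,x_n]\longrightarrow \prod_{a\in\F_q^n}\F_q,\qquad f\longmapsto (f(a))_{a\in\F_q^n}.
\]
The substantive step is to prove $\ker\phi=\langle x_1^q-x_1,\dots,x_n^q-x_n\rangle$. The inclusion $\supseteq$ is again $a^q=a$. For $\subseteq$, given $f\in\ker\phi$ one invokes the multivariate division principle to divide $f$ by $x_1^q-x_1,\dots,x_n^q-x_n$, whose leading monomials $x_1^q,\dots,x_n^q$ are pairwise coprime, obtaining $f=\sum_{i=1}^n a_i(x_i^q-x_i)+r$ with no monomial of $r$ divisible by any $x_i^q$, i.e.\ $\deg_{x_i}r<q$ for all $i$. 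Since $f$ and all the $x_i^q-x_i$ vanish on $\F_q^n$, so does $r$; an induction on $n$ — expanding $r=\sum_{j<q}r_j(x_1,\dots,x_{n-1})x_n^j$ and using that a nonzero univariate polynomial of degree $<q$ cannot have $q$ roots, so each $r_j$ vanishes on $\F_q^{n-1}$ — forces $r=0$, whence $f\in\langle x_i^q-x_i\rangle$. Surjectivity of $\phi$ is then clean: $\phi\bigl(\prod_{i=1}^n(1-(x_i-a_i)^{q-1})\bigr)$ is the indicator function $\delta_a$ of the point $a$, using that nonzero elements of $\F_q$ satisfy $t^{q-1}=1$, and these indicators span the target.

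Finally I would identify $\phi(J)$, which is an ideal of $\prod_a\F_q$ because $\phi$ is surjective. It is contained in $\{h:h|_{V(J)}=0\}$; conversely, for $a\notin V(J)$ pick $g\in J$ with $g(a)\neq 0$, and then $\delta_a=\phi(g)\cdot(g(a)^{-1}\delta_a)\in\phi(J)$, so $\phi(J)$ contains every $\delta_a$ with $a\notin V(J)$ and hence every function vanishing on $V(J)$; thus $\phi(J)=\{h:h|_{V(J)}=0\}$. Taking preimages and using $\phi^{-1}(\phi(J))=J+\ker\phi$,
\[
J+\langle x_1^q-x_1,\dots,x_n^q-x_n\rangle=\phi^{-1}(\phi(J))=\{f:f(a)=0\ \text{for all }a\in V(J)\}=I(V(J)),
\]
which is the assertion. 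The only place where any real work occurs is the kernel computation $\ker\phi=\langle x_i^q-x_i\rangle$, resting on the vanishing lemma for polynomials of bounded per-variable degree; everything downstream is the formal ideal theory of $\prod_a\F_q$, and no genuine obstacle is expected — this is the classical Nullstellensatz over a finite field.
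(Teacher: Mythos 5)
Your proof is correct. The paper itself supplies no argument for this proposition — it simply defers to Theorem 3.1.2 of Gao's thesis \cite{Gao} — so there is no in-text proof to compare against; your self-contained derivation is the standard route and every step checks out. Briefly: the easy inclusion uses $a^q=a$; the kernel identification $\ker\phi=\langle x_i^q-x_i\rangle$ follows from the multivariate division principle (you only need existence of the remainder, not uniqueness, so no Gröbner-basis hypothesis is required — though the pairwise-coprime leading monomials $x_i^q$ do make $\{x_i^q-x_i\}$ a Gröbner basis by Buchberger's first criterion) together with the induction that a polynomial of per-variable degree $<q$ vanishing on all of $\F_q^n$ is identically zero; surjectivity of $\phi$ via the indicator polynomials $\prod_i\bigl(1-(x_i-a_i)^{q-1}\bigr)$ is clean; and the identification $\phi(J)=\{h:h|_{V(J)}=0\}$ plus $\phi^{-1}(\phi(J))=J+\ker\phi$ delivers the statement. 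One could alternatively derive the result from Hilbert's Nullstellensatz over $\bar\F_q$ after showing $J+\langle x_i^q-x_i\rangle$ is radical, but your direct function-ring argument is more elementary and avoids the algebraic closure entirely, which is well suited to the counting application (Proposition~6) the paper uses it for.
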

    See proof of Theorem 3.1.2, \cite{Gao}. Also
    \begin{prop}[Theorem 3.2.4 of \cite{Gao}]\label{prop-counting}
    	Let $J\subset\F_q[x_1,\cdots,x_n]$ be any ideal and $\sqrt{J}=J+\langle x_1^q-x_1,\cdots,x_n^q-x_n\rangle$. Then
    	\[|SM(\sqrt{J})|=|V(J)|.\]
    \end{prop}
    
    In addition, the following two explicit forms of Noether normalization theorem are significant to our applications.
    \begin{prop}[Theorem 3.4.1 of \cite{Greuel-Pfister}]\label{prop-leading power in Noether normalization}
    	Let $K$ be a field and $I\subset K[x_1,\dots,x_n]$ be an ideal. Then there exist an integer $s\leq n$ and an isomorphism $\varphi: K[x_1,\dots,x_n]\rightarrow A:=K[y_1,\dots,y_n]$, such that:
    	
    	(1) the induced morphism $K[y_{s+1},\dots,y_n]\rightarrow A/\varphi(I), y_i\mapsto y_i\mod{\varphi(I)}$ is injective and finite.
    	
    	(2) Moreover, $\varphi$ can be chosen such that, for $j=1,\dots,s$, there exist polynomials 
    	\[g_j=y_j^{e_j}+\sum_{k=0}^{e_j-1}\xi_{j,k}(y_{j+1},\dots,y_n)\cdot y_j^k\in\varphi(I)\]
    	satisfying $e_j\geq\deg(\xi_{j,k})+k$ for $k=0,\dots,e_j-1$.
    	
    	(3) If $I$ is homogeneous then $g_j$ can be chosen to be homogeneous too. 
    	
    	(4) If $K$ is infinite then $\varphi$ can be chosen to be linear, i.e. $\varphi(x_i)=\sum_{j}m_{ij}y_j$ with $(m_{ij})\in\GL_n(K)$.
    \end{prop}
    \begin{prop}[Theorem 3.5.1 (6) of \cite{Greuel-Pfister}]\label{prop-variables in Noether normalization}
    	Let $K$ be a field, $I \subset A=K[x]$ be an ideal and $u \subset x=\{x_1,\dots,x_n\}$ be a subset such that $I \cap K[u]=0$, then $\operatorname{dim}(A / I) \geq \# u .$ Furthermore, there exists some $u \subset x$ with $I \cap K[u]=0$ and $\operatorname{dim}(A / I)=\# u$.
    \end{prop} 
   \section{Casas-Alvero conjecture over $\Q$}\label{section-Casas-Alvero over Q} 
   In this section, we verify the hypothesis of Proposition \ref{prop-over finite integers} modulo large primes $p$, i.e. over a finite field $\F_p$, through specifically realizing Noether normalization as of Proposition \ref{prop-leading power in Noether normalization}.
   To organize calculation, we use the following notation (so called \textit{Hasse derivative}): 
   \[H_i(x_k)=\sum_{1\leq j_1<\cdots< j_{n-i}\leq n}(x_k-x_{j_1})\cdots(x_k-x_{j_{n-i}}), 1\leq i\leq n-1, 1\leq k\leq n.\]
   If $f(x)=(x-x_1)\cdots(x-x_n)=x^n+a_{n-1}x^{n-1}+\cdots+a_{1}x+a_0$, its $i-$th Hasse derivative is just:
   \begin{equation}\label{equation-Hasse derivative}H_i(x)={n\choose i}x^{n-i}+{n-1\choose i}a_{n-1}x^{n-1-i}+\cdots+{i\choose i}a_i=\frac{1}{i!}f^{(i)}(x).\end{equation}
   
   We first deal with a special case of Theorem \ref{thm-general branches}, i.e. the branches defined by $H_1(x_{i_1}),\dots$, $H_{n-1}(x_{i_{n-1}})$ with $i_1,\dots,i_{n-1}$ distinct. We call them the \textit{main branches}. By symmetry, they are all isomorphic to the one defined by $H_{1}(x_{n-1}),\dots,H_{n-1}(x_1)$. Let $J=\langle H_{1},\cdots,H_{n-1}\rangle$ with $H_i:=H_{n-i}(x_{i})$ and $p$ be some sufficiently large prime which we will specify later. By Proposition \ref{prop-counting}, to verify the hypothesis of Proposition \ref{prop-over finite integers} for $m=p$, we need
   \begin{prop}\label{prop-finite Casas-Alvero} 
   \[|V(J)|=|SM(J+\langle x_1^p-x_1,\cdots,x_n^p-x_n\rangle)|=p,\]
   which are all defined over $\F_p$, for some sufficiently large $p$.
   \end{prop} 
   Obviously we have $V(J)\supset \{(a,a,\cdots,a)\in\F_p^n\mid a\in\F_p\}$. So if we can show $|V(J)|=p$ for all sufficiently large $p$, then this obvious subset with $p$ elements must be $V(J)$ itself. By Proposition \ref{prop-over finite integers} we essentially need only to find one such ``good" prime $p$.
   
   The proof of Proposition \ref{prop-finite Casas-Alvero} relies on information of general Gr\"{o}bner bases of $J+\langle x_1^p-x_1,\cdots,x_n^p-x_n\rangle$. Here are some examples for small $n$.
   \begin{exam}
   	For $n=1$, $J=0$ is trivial and we can choose $G=\{x_1^p-x_1\}$ for any $p$. Hence $SM(G)=\{1,x_1,\cdots,x_1^{p-1}\}$ with cardinality $p$.
   	
   	For $n=2$, $J=\langle x_1-x_2\rangle$, and we can choose $G(J+\langle x_1^p-x_1,x_2^p-x_2\rangle)=\{x_1-x_2, x_2^p-x_2\}$ for any $p$. Hence the missing monomials from $\langle LM(G)\rangle$ consist in $SM(G)=\{1,x_2,\cdots,x_2^{p-1}\}$, again with cardinality $p$.
   	
   	For $n=3$, $J=\langle H_1=2x_1-x_2-x_3,  H_2=(x_2-x_1)(x_2-x_3)\rangle$, we have
   	\[x_1=\dfrac{1}{2}(x_2+x_3) \mod{\langle H_1\rangle},\]
   	\[H_2=\left(x_2-\dfrac{1}{2}(x_2+x_3)\right)(x_2-x_3) \mod{\langle H_1\rangle}\]
   	\[=\dfrac{1}{2}(x_2-x_3)^2 \mod{\langle H_1\rangle}\Longrightarrow (x_2-x_3)^2\in J\]
   	\[\Longrightarrow (x_2-x_3)^{p}\sim x_2-x_3\in J+\langle x_1^p-x_1,\cdots,x_n^p-x_n\rangle\]
   	hence we can choose $G(J+\langle x_1^p-x_1,x_2^p-x_2, x_3^p-x_3\rangle)=\{x_1-x_2, x_2-x_3, x_3^p-x_3\}$ for any odd $p$ (so that $1/2$ makes sense). Thus $SM(G)=\{1,x_3,\cdots,x_3^{p-1}\}$ again with cardinality $p$.
   	
   	For $n=4$, we similarly get $G=\{x_1-x_4, x_2-x_4,x_3-x_4,x_4^p-x_4\}$ for $p>7$. Again $|SM(G)|=p$.
   \end{exam} 
      	These simple cases can all be computed by hand. However, the complexity of computing these Gr\"{o}bner bases exponentially increases along with the number of variables. For simplification, we show that Proposition \ref{prop-finite Casas-Alvero} can be further reduced as follows.
      	\begin{prop}
      		$|V(J)|=p\Leftrightarrow |V(J)|<p^2$.
      	\end{prop}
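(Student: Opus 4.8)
The forward implication is immediate since $p<p^2$, so the real content is the converse: assuming $|V(J)|<p^2$ I would deduce $|V(J)|=p$. Throughout, $V(J)$ denotes the set of $\F_p$-rational points, i.e. the zero set of $J+\langle x_1^p-x_1,\dots,x_n^p-x_n\rangle$. The plan is to exploit the fact that $V(J)$ is stable under the diagonal affine action $\sigma_{c,d}\colon(x_1,\dots,x_n)\mapsto(cx_1+d,\dots,cx_n+d)$ for $c,d\in\F_p$, and then to observe that any orbit not contained in the diagonal already carries $p^2$ points.

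First I would record the quasi-homogeneity of the generators of $J$. Since each factor transforms as $(cx_k+d)-(cx_j+d)=c(x_k-x_j)$, the generator $H_i(f)(x_{i_i})$ — a sum of products of exactly $n-i$ factors of this shape — transforms under $\sigma_{c,d}$ by the scalar $c^{\,n-i}$. Because $1\le i\le n-1$, the exponent $n-i$ is at least $1$, so this identity holds uniformly for all $c\in\F_p$, including $c=0$ (where both sides vanish). Hence if $v\in V(J)$ then $\sigma_{c,d}(v)\in V(J)$ for every $c,d\in\F_p$, and $\sigma_{c,d}$ trivially preserves the field equations $x_i^p-x_i$ as well. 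I would also note that every $H_i(f)(x_k)$ vanishes at a diagonal point $(a,\dots,a)$, each summand containing the factor $a-a=0$; thus the diagonal line $L=\{(a,\dots,a):a\in\F_p\}$, which has exactly $p$ points, is contained in $V(J)$.

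Next, suppose for contradiction that $V(J)$ contains a point $v=(v_1,\dots,v_n)$ whose coordinates are not all equal. Then $v$ is not a scalar multiple of $\mathbf 1=(1,\dots,1)$, so $v$ and $\mathbf 1$ are linearly independent over $\F_p$ and the map $\F_p^2\to\F_p^n$, $(c,d)\mapsto cv+d\mathbf 1$, is injective. Its image $P_v$ is therefore a set of $p^2$ points, and $P_v\subseteq V(J)$ by the invariance established above. This gives $|V(J)|\ge p^2$, contradicting the hypothesis. Hence $V(J)$ has no point off the diagonal, i.e. $V(J)=L$, and $|V(J)|=p$.

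I do not expect a genuine obstacle here; the two places that demand a little care are that the scaling exponent $n-i$ is strictly positive for each of the generators — which makes the $c=0$ case harmless and removes any need to treat the diagonal separately in the invariance step — and that the linear independence of $v$ and $\mathbf 1$, which is exactly the hypothesis $v\notin L$, is what forces the orbit plane $P_v$ to carry the full $p^2$ points rather than collapsing. The value of the reduction is practical: to establish Theorem 1 it now suffices to produce a single prime $p$ for which $V(J)$ fails to contain an affine plane, and this qualitative condition is far more tractable for a Gr\"obner basis computation than pinning $|V(J)|$ to $p$ on the nose.
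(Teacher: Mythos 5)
Your proof is correct and takes the same approach as the paper: if $V(J)$ contains a point $v$ not on the diagonal, then the two-dimensional span of $v$ and $(1,\dots,1)$ lies in $V(J)$, forcing $|V(J)|\ge p^2$. You make the invariance step explicit (each Hasse derivative $H_i(f)(x_k)$ scales by $c^{\,n-i}$ with $n-i\ge 1$ under $(x_1,\dots,x_n)\mapsto(cx_1+d,\dots,cx_n+d)$), which the paper only gestures at with the word ``similarly''; this is an expository improvement rather than a different method.
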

      	\begin{proof}
      		If there exists $A=(a_1,\cdots,a_n)\in V(J)$ with coordinates not all equal, then $V(J)\supset span\langle(1,\cdots,1), A\rangle$ forms a two dimensional subspace, i.e. $|V(J)|\geq p^2$. The other direction goes by the same observation.
      	\end{proof}
      	The above arithmetic reduction can be resolved by attaining a more computationally manageable goal as follows.
      	\begin{prop}\label{prop-finite nullstellensatz application}
      		If for each $k\in\{1,\cdots,n-1\}$, there is an integer $m_k\geq 1$ such that $x_k^{m_k}\in LM(J)$, then for any $p\gg m_1\cdots m_{n-1}$, $|V(J)|<p^2$.
      	\end{prop}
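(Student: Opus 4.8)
The plan is to feed the hypothesis directly into the counting principle of Proposition 6, using only the elementary fact that enlarging an ideal can only enlarge its ideal of leading monomials. Fix the graded-lexicographic monomial order throughout, set $J^{+}=J+\langle x_1^p-x_1,\ldots,x_n^p-x_n\rangle$, and let $G$ be a Gr\"{o}bner basis of $J^{+}$ over $\F_p$. By Proposition 6 we have $|V(J)|=|SM(G)|$, so the task is to bound the number of monomials $x^\alpha\notin\langle LM(G)\rangle$.

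First I would locate enough leading monomials. Because $J\subseteq J^{+}$, we have $\langle LM(J)\rangle\subseteq\langle LM(J^{+})\rangle=\langle LM(G)\rangle$; hence, by hypothesis, $x_k^{m_k}\in\langle LM(G)\rangle$ for every $k\in\{1,\ldots,n-1\}$. In addition, $x_n^p-x_n\in J^{+}$ has leading monomial $x_n^p$, so $x_n^p\in\langle LM(G)\rangle$ as well. Consequently, no standard monomial $x^\alpha=x_1^{\alpha_1}\cdots x_n^{\alpha_n}$ can be divisible by any of $x_1^{m_1},\ldots,x_{n-1}^{m_{n-1}},x_n^p$, which forces $\alpha_k<m_k$ for $1\le k\le n-1$ and $\alpha_n<p$.

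Counting the exponent vectors subject to these constraints gives
\[
|V(J)|=|SM(G)|\le m_1 m_2\cdots m_{n-1}\cdot p .
\]
Hence, as soon as $p>m_1 m_2\cdots m_{n-1}$, and in particular for all $p\gg m_1\cdots m_{n-1}$, we obtain $|V(J)|\le(m_1\cdots m_{n-1})\,p<p^{2}$, which is the assertion. Together with Proposition 7 this reduces the proof of Theorem 1 (for a suitably large prime) to exhibiting, for each branch $J$, a pure power of each of $x_1,\ldots,x_{n-1}$ lying in $\langle LM(J)\rangle$, a finite computation that does not involve $p$.

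The argument carries no serious obstacle; the two points that need attention are that the monomial order used to certify $x_k^{m_k}\in\langle LM(J)\rangle$ must coincide with the one used to compute $G$, so that $\langle LM(J)\rangle\subseteq\langle LM(G)\rangle$ is legitimate, and that the hypothesis controls only the first $n-1$ variables, which is precisely why the last coordinate $\alpha_n$ must be bounded separately through the field equation $x_n^p-x_n$. The real work, namely producing those monomials $x_k^{m_k}\in\langle LM(J)\rangle$ with the $m_k$ independent of $p$, is deferred to the Gr\"{o}bner-basis computations for general $n$.
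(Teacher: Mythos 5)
Your argument is correct and is essentially the one the paper gives, only written out in fuller detail: you both observe that every standard monomial of $J+\langle x_1^p-x_1,\ldots,x_n^p-x_n\rangle$ must have $\alpha_k<m_k$ for $k\le n-1$ (from the hypothesized leading monomials) and $\alpha_n<p$ (from the field equation), yielding $|V(J)|\le m_1\cdots m_{n-1}\,p<p^2$ once $p>m_1\cdots m_{n-1}$. Your added remarks — that $\langle LM(J)\rangle\subseteq\langle LM(J^+)\rangle$ because $J\subseteq J^+$, and that the same monomial order must be used throughout — are precisely the details the paper leaves implicit, so this is the paper's proof, just made explicit.
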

      	\begin{proof}
      		By the condition, if $x^\alpha=x_1^{\alpha_1}\cdots x_n^{\alpha_n}\notin LM(J)$ then $\alpha_k<m_k, k=1,\cdots,n-1$. Thus we have 
      		\[|SM(J+\langle x_1^p-x_1,\cdots,x_n^p-x_n\rangle)|\leq m_1\cdots m_{n-1} p<p^2, \forall p\gg m_1\cdots m_{n-1}.\]      		 
      	\end{proof} 
      	We will see later that $p$ is also confined by the structural coefficients depending on $n$ in derivation of the leading monomials $x_k^{m_k}$. With the last reduction by Proposition \ref{prop-finite nullstellensatz application}, in case of main branches we need to prove
      	
      	\begin{prop}\label{prop-main branch}
      		For each $k=1,\cdots,n-1, n\geq 3$, there is an integer $m_k\geq 1$ s.t. $x_k^{m_k}\in LM(J)$, with $J=\langle H_1,\dots, H_{n-1}\rangle$ ($H_i=H_{n-i}(x_{i})$) an ideal in $\Q[x_1,\cdots,x_n]$. 
      	\end{prop} 
      	The theorem suffices for proving Proposition \ref{prop-finite nullstellensatz application} since the algorithm of obtaining those leading terms involves fractions only depending on $n$ and performs identically over $\F_p$ for $p$ sufficiently larger than $n$ and all denominators of the structural coefficients used. Before proving the theorem, we study some examples for small $k\leq n$.
        \begin{exam}\label{exam-m_k}Suppose $n\geq 4$.
        	For $k=1$, $H_{1}=(n-1)x_1-(x_2+\cdots+x_n)$ gives a linear relation between all the variables and $x_1\in LM(J)$, so that we can always set $m_1=1$. 
        	
        	For $k=2$, replacing $x_1$ by $-\frac{1}{n-1}(x_2+\cdots+x_n)$, we get
        	\begin{align}\label{equation-x2}H_{2}(x_2)=G_2x_{2}^2+G_1x_2+G_0\ \mod{\langle H_{1}\rangle},
        	\end{align}
        	with $G_2,G_1,G_0$ homogeneous polynomials in $x_3,\cdots,x_n$, of degrees $0, 1, 2$ respectively. We can compute explicitly that
        	\[G_2=\dfrac{1}{n-1}(n-2)^2+{n-2\choose 2}=\dfrac{(n-2)(n^2-2n-1)}{2(n-1)}\neq 0.\]
        	Thus we can always set $m_2=2$.
        	
        	Next, we show that $m_3$ can always be set to $8$. We still replace $x_1$ by $-\frac{1}{n-1}(x_2+\cdots+x_n)$ in $H_{3}$ to get
        	\[H_{3}=K_2x_{2}^2+K_1x_2+K_0\ \mod{\langle H_{1}\rangle},\]
        	with $K_2,K_1,K_0$ homogeneous polynomials in $x_3,\cdots,x_n$, of degrees $1, 2, 3$ respectively. Using (\ref{equation-x2}) we can kill $x_2^2$ and get
        	\[H_{3}=L_1x_2+L_0\mod{\langle H_{1}, H_{2}\rangle},\]
        	with $L_1,L_0$ homogeneous polynomials in $x_3,\cdots,x_n$, of degrees $2, 3$ respectively. Now to kill $x_2$ in $H_{3}$ we need a non-linear cancellation aside with (\ref{equation-x2}) as follows:
        	\begin{align*}
        	H_{3}^2&=L_1^2x_2^2+2L_1L_0x_2+L_0^2\\
        	&=L_{1}'x_2+L'_0\mod{\langle H_{1}, H_{2}\rangle},\end{align*}
        	with $L'_1,L'_0$ homogeneous polynomials in $x_3,\cdots,x_n$, of degrees $5, 6$ respectively. Then 
        	\[L_1H_{3}^2-L'_1H_{3}=L''_0\mod{\langle H_{1}, H_{2}\rangle},\]
        	with $L''_0=L_1L'_0-L'_1L_0$ a homogeneous polynomial in $x_3,\cdots,x_n$ of degree $8$. By carrying out the detailed calculation we find $LM(L''_0)=x_3^8$.    				
        \end{exam} 
        The above example inspires us to consider general higher order non-linear cancellations likewise. Thus we introduce some extra notions besides those defined in section \ref{section-comuptational notations}, in that we need to write down the coefficients of the Hasse derivatives more explicitly. 
        
        Let $[n]$ denote the \textit{chain} $(1<2<\cdots<n)$. If $c=(j_1<\cdots<j_t)$, we say $c$ is a chain of \textit{length} $t$, denoted by $l(c)=t$. By $c\leq_{t}[n]$ we indicate that $c$ is a length $t$ sub-chain of $[n]$. We may also use $c$ to denote a \textit{multichain} $(j_1\leq j_2\leq\cdots\leq j_t)$ and $\alpha(c)=(\alpha(c)_1,\dots,\alpha(c)_n)$ to denote its occurrence vector with the part $j$ occurring $\alpha(c)_j$ times in $c$. Then  $l(c)=|\alpha(c)|$ is the total number of occurrences and if no confusion $c\leq_t[n]$ is a multichain with $l(c)=t$ and all its parts belonging to $[n]$. For short, $j\in c$ means $j$ occurs in $c$. Also, by $c_1+c_2$ we mean a derived multichain (or chain) from two multichains by collecting their parts into one. 
        
        For any vectors $\alpha,\beta\in\N^n$ we define ${\beta\choose\alpha}:={\beta_1\choose\alpha_1}\cdots{\beta_n\choose\alpha_n}$. The number of sub-multichains $c'\leq c$ with $\alpha(c')=\alpha$ prescribed is ${\alpha(c)\choose\alpha}$, which is also valid if $\alpha$ is not comparable with $\alpha(c)$ since then ${\alpha(c)\choose\alpha}$ vanishes. In addition, $\alpha\preceq\beta$ means $\alpha_j\leq\beta_j, j=1,\dots,n$. Clearly ${\beta\choose\alpha}>0$ if and only if $\alpha\preceq\beta$.

            Now direct computation on the Hasse derivative yields
        	\begin{align}\label{equation-leading coefficient}H_{n-i}^m(x_k)=&\left(\sum_{c\leq_{i}[n]}\prod_{j\in c}(x_k-x_{j})\right)^m=\sum_{c_1,\dots,c_m\leq_{i}[n]}\prod_{j\in c_1+\cdots+c_m}(x_k-x_{j})\\\notag
        	=&\sum_{c_1,\dots,c_m\leq_{i}[n]}\sum_{c\leq c_1+\cdots+c_m}(-1)^{l(c)}{\alpha(c_1)+\cdots+\alpha(c_m)\choose\alpha(c)}x^{\alpha(c)}x_k^{im-l(c)}\\\notag
        	=&\sum_{c_1,\dots,c_m\leq_{i}[n]}\sum_{\alpha\preceq\alpha(c_1+\cdots+c_m)}(-1)^{|\alpha|}{\alpha(c_1+\cdots+c_m)\choose\alpha}x^{\alpha}x_k^{im-|\alpha|}\\\notag
        	=&\sum_{\alpha\in\N^n}(-1)^{|\alpha|}\sum_{c_1,\dots,c_m\leq_{i}[n]}{\alpha(c_1+\cdots+c_m)\choose\alpha}x^{\alpha}x_k^{im-|\alpha|}\\
        	=&\sum_{0<(\alpha_1,\dots,\alpha_{k-1},0,\dots,0)=\alpha\preceq(m,\dots,m)}x^\alpha h_{\alpha,m}+\binom{n}{i}^{m}x_{k}^{im}+r_{ikm},\notag
        	\end{align}
        	where for short $0=(0,\dots,0)$ and $r_{ikm}$ summons the remaining terms with $x^\alpha<x_{k}^{im}$. For any $\alpha=(\alpha_1,\dots,\alpha_{k-1},0,\dots,0)$ with $\alpha_j\leq m$, i.e.  $x^\alpha\geq x_k^{im}$, the leading term $LT(h_{\alpha,m})$ appears as 
        	\[(-1)^{|\alpha|}\sum_{c_1,\dots,c_m\leq_{i}[n], \alpha(c_j)_k=0}{\alpha(c_1+\cdots+c_m)\choose\alpha}x_k^{im-|\alpha|}=C_{\alpha,i}(m)x_k^{im-|\alpha|},\]
        	noting that the terms with $\alpha_k\neq 0$ sum up to zero. It is worth to mention the easy observation that for any two symmetric vectors $\alpha=\sigma\cdot\alpha'$, i.e. $\alpha$ can be obtained by permuting the coordinates of $\alpha'$ using some $\sigma\in S_{k-1}$, we always have $C_{\alpha,i}(m)=C_{\alpha',i}(m)$. Conversely one easily checks that identical columns corresponds to symmetric $\alpha$'s. 
           
        	Clearly those leading coefficients $C_{\alpha,i}(m)$ do not vanish. If $\alpha\preceq\alpha'$, then the multichains $c$ with $\alpha'\preceq \alpha(c)$ also satisfy $\alpha\preceq\alpha(c)$ whence $|C_{\alpha,i}(m)|\geq|C_{\alpha',i}(m)|$. Thus $C_{\alpha,i}(x_k)$ attains maximum only when $\alpha=0$ which is 
        	\[C_{0,i}(m)=\sum_{c_1,\dots,c_m\leq_{i}[n],\alpha(c_j)_k=0}{\alpha(c_1+\cdots+c_m)\choose0}={n-1\choose i}^m,\]
        	i.e. the coefficient of $x_k^{im}$. The minimum is attained when $\alpha_1=\cdots=\alpha_{k-1}=m$ which is ${n-k+1\choose i-k+1}^m$ (vanishes if $i<k-1$). 
         
         The above expression of $C_{\alpha,i}(m)$ is equivalent to using the multivariate Fa\`{a} di Bruno's formula noting that $C_{\alpha,i}(m)$ is nothing but a multiple of $\frac{\partial x^{im}}{\partial x_1^{\alpha_1}\cdots\partial x_{k-1}^{\alpha_{k-1}}\partial x_k^{im-|\alpha|}}H_{n-i}^m(x_k)$.
         However, a computationally more accessible formula is given by the following Combinatorial Nullstellensatz as of \cite{Karasev-Petrov}.
         \begin{prop}\label{prop-combinatorial nullstellensatz}
         	For any $f\in K[x_1,\dots,x_n]$ of degree $|\alpha|$ over an arbitrary field $K$, the coefficient of $x^\alpha$ in $f$ has the following expression:
         	\[[x^\alpha]f(x_1,\dots,x_n)=\sum_{b_j\in A_j}\dfrac{f(b_1,\dots,b_n)}{\varphi_1'(b_1)\cdots\varphi_n'(b_n)},\]
         	where $A_j\subset K$ are any subsets of size $\alpha_j+1$ and $\varphi_j(x)=\prod_{b\in A_j}(x-b)$.
         \end{prop}
         If we choose $A_j=\{0,1,\dots,\alpha_j\}$ for $j\leq k-1$ and $A_{k+1}=\cdots=A_n=\{0\}$, the above Nullstellensatz promises         
         \begin{equation}\label{equation-combinatorial nullstellensatz}C_{\alpha,i}(m)=\sum_{b_j\leq\alpha_j}\dfrac{\left(H_{n-i}^m(x_k)\right)(b_1,\dots,b_{k-1},1,0,\dots,0)}{\prod_{j=1}^{k-1}\prod_{b_j\neq b\leq \alpha_j}(b_j-b)}.\end{equation}
         More significantly, it implies the following arithmetic on $C_{\alpha,i}(m)$ which is crucial to our later proof.
          
          \begin{lem}\label{lem-arithmetic on coefficients}
          	Keep notations above and gather $\alpha=(\alpha_{1},\dots,\alpha_{k-1},0,\dots,0)\in\N^n$ with $\alpha_j\leq m_j\in\Z_+$, $j=1,\dots, k-1$, no two of which can be identified by permuting their first $k-1$ coordinates. Denote by $N$ the number of such vectors. Then for any $M\in\N$, the $N$ by $N$ square matrix $(C_{\alpha,i}(m))$ with $M+1\leq m\leq M+N$  is non-degenerate.          	
          \end{lem}
      \begin{proof}
      	For $j\leq k-1$, choose $A_{j}=\{b_{j,0},\dots,b_{j,m_j}\}\subset\Q$ of $m_j+1$ numbers, 
        such that the values $H_{n-i}(b_1,\dots,b_{k-1},1,0,\dots,0)\neq0$ are all distinct for different $(b_1,\dots,b_{k-1})\in A_1\times\cdots\times A_{k-1}$. (This is possible since the condition defines an open subset of $\Q^{k-1}$.)
      	
      	Now following the Combinatorial Nullstellensatz, we can write
      	$$(C_{\alpha,i}(M+l))=\left(H_{n-i}^{M+l}(b_1,\dots,b_{k-1}, 1,0,\dots,0)\right)\left(\Phi_\alpha\right),$$
      	where $\left(H_{n-i}^{M+l}(b_1,\dots,b_{k-1},1,0,\dots,0)\right)_{1\leq l\leq N, b_j\in A_j}=:H$ is an $N$ by $(m_1+1)$ $\cdots(m_{k-1}+1)$ matrix, and $\left(\Phi_\alpha\right)$ is a matrix of $N$ columns. Here corresponding to the formula of (\ref{equation-combinatorial nullstellensatz}), for each $\alpha$, $\Phi_\alpha=(\phi^\alpha_{b_1,\dots,b_{k-1}})$ is designated to produce $C_{\alpha,i}(M+l)$ by multiplying the $l$-th row of $H$ for any $l\leq N$. Thus $\phi^\alpha_{b_1,\dots,b_{k-1}}=\dfrac{1}{\varphi_{\alpha,1}'(b_1)\cdots\varphi_{\alpha,k-1}'(b_{k-1})}$ for $b_j$ ranging from $b_{j,0}$ to $b_{j,\alpha_j}$, otherwise $\phi^\alpha_{b_1,\dots,b_{k-1}}=0$, in which $\varphi_{\alpha,j}(x)=\prod_{r=0}^{\alpha_j}(x-b_{j,r})$. 
      	
      	By our choice of $A_j$ and noting that the number of columns of $H$ is generally larger than $N$, any $N$ by $N$ minor sub-matrix of $H$ is a Vandermonde matrix, hence $H$ has rank $N$. If we can show $(\Phi_\alpha)$ also has rank $N$, then $(C_{\alpha,i}(M+l))$ is non-degenerate (of rank $N$). Suppose there exists linear dependence: $\sum f_\alpha\Phi_\alpha=0$. Pick all the columns with $f_\alpha\neq 0$ and find all the maximal ones among them along $\prec$ which are all unique. Say $\beta$ is maximal, then its (lowest) entry in the row indexed by $\beta_1,\dots,\beta_{k-1}$ is the only nonzero entry in this row among all the picked columns, hence $f_\beta$ must be zero, a contradiction.
      \end{proof}
      \begin{remark}
      	Employing generalized Vandermonde matrices, the matrix $(C_{\alpha,i}(m))$ may be shown non-degenerate for cases where $N$ positive integers $m$ are not necessarily consecutive. Also note that the hypothesis on symmetry is restricted to $\alpha$ with $\alpha_j\leq m_j$. For example, if $m_1=2, m_2=4$, then $x_1^2x_2^3$ is not symmetric to $x_1^3x_2^2$ since the latter is not in our consideration.
      \end{remark}
      	\begin{proof}[Proof of Proposition \ref{prop-main branch}]
      		Let $J_k=\langle H_{1},\cdots,H_{k}\rangle$, $1\leq k\leq n-1$. We want to show by induction, for all $k\leq n-1$ $J_k$ contains homogeneous polynomials $g_1,\cdots,g_k$ with leading terms $LT(g_i)=x_i^{m_i}$ for some $m_i\geq 1, i=1,\cdots,k$, and $g_i$'s are symmetric in $x_{i+1},\cdots,x_{n}$, i.e. $g_i=x_i^{m_i}+a_{m_i-1}x_i^{m_i-1}+\cdots+a_0$ with $a_l\neq 0$ being symmetric in $x_{i+1},\cdots,x_n$ for $l\leq m_{i}-1$. Note that we can always set $m_1=1,m_2=2,m_3=8$ by Example \ref{exam-m_k}.
      		      		
      		Assuming the cases for $1\leq k-1$ ($\leq n-2$) we need to verify it for $k$. Let $\bar{m}_{k-1}=(m_1,\cdots,m_{k-1},0,\cdots,0)\in\N^n$. By (\ref{equation-leading coefficient}) we write for any $m\in\Z_+$
      		\[H_{k}^m=\sum_{\alpha=(\alpha_1,\dots,\alpha_{k-1},0,\dots,0)>(0,\dots,0)}x_1^{\alpha_1}\cdots x_{k-1}^{\alpha_{k-1}}h_{\alpha,m}+\binom{n}{k}^{m}x_{k}^{km}+r_{m},\]
      		where $r_m$ collects the monomials smaller than $x_k^{km}$ in lexicographic order. For any term with $\alpha\geq \bar{m}_{k-1}$, say $\alpha_i\geq m_i$ for some $i\leq k-1$, we can replace $x_i^{\alpha_i}$ by $x_i^{\alpha_i-m_i}(g_i-x_i^{m_i})$ when modulo $J_{k-1}$. Since $g_i$ is symmetric in $x_{k},\dots,x_{n}$, $g_i-x_i^{m_i}$ can not have $x_{k}^{m_i}$ as leading monomial. Thus the replacement does not affect $x_k^{km-|\alpha|}$ as the leading monomial of $h_{\alpha,m}$ by (\ref{equation-leading coefficient}). After all such replacements until there is no $\alpha_i\geq m_i$ for any $i\leq k-1$,
      		\begin{equation}\label{equation-reduced H}H_{k}^m=\sum_{(0,\dots,0)<\alpha\prec \bar{m}_{k-1}}x^{\alpha}h'_{\alpha,m}+\binom{n}{k}^{m}x_{k}^{km}+r_{m}\mod J_{k-1},\end{equation}
      		where each $h'_{\alpha,m}$ is symmetric in $x_{k},\dots,x_n$ and  $LM(h'_{\alpha,m})=LM(h_{\alpha,m})=x_k^{km-|\alpha|}$. Recall that $\prec$ denotes for each coordinate of the left vector being zero or strictly less than that of the right respectively.	
      		
      		Now similar to Gaussian elimination, by row reduction we may kill the terms with those $\alpha$, i.e. by performing \begin{align*}&h'_{\alpha',m_{1}}H_k^{m_{2}}-h'_{\alpha',m_{2}}H_k^{m_{1}}\\
      		=&\sum x^{\alpha}\left(h'_{\alpha',m_{1}}h'_{\alpha,m_{2}}-h'_{\alpha',m_{2}}h'_{\alpha,m_{1}}\right)\\
      		&+h'_{\alpha',m_{1}}\left(\binom{n}{k}^{m_{1i}}x_{k}^{km_{2}}+r_{m_{2}}\right)-h'_{\alpha',m_{2}}\left(\binom{n}{k}^{m_{11}}x_{k}^{km_{1}}+r_{m_{1}}\right)\end{align*}
      		to kill the term with $\alpha'$ in $H_k^{m_2}$ (modulo the ideal $J_{k-1}$). Note that $h'_{\alpha',m_{1}}h'_{\alpha,m_{2}}$ and $h'_{\alpha',m_{2}}h'_{\alpha,m_{1}}$ have identical leading monomial $x_k^{k(m_{1}+m_{2})-|\alpha'|-|\alpha|}$for $\alpha\neq\alpha'$. Suppose their leading coefficients do not coincide, we can proceed likewise to kill terms with $\alpha_2$ and so on until $\alpha_t$ is killed if possible, and we are done with the proof. 
      		
      		In the process, the Gaussian elimination of leading terms is equivalent to that of leading coefficients $C_{\alpha}(m)$, which leads us to study the matrix $C=(C_{\alpha}(m))_{m\in\N}$. Note that if $\alpha'$ and $\alpha$ are symmetric, their corresponding columns are identical so that the matrix becomes degenerate. However, any row reductions performed on the two columns are also identical. Thus if one is killed so is the other. This suggests what we should really study is the matrix $\tilde{C}:=S_n\backslash C$, i.e. the symmetric (identical) columns of $C$ are assimilated. Then $\tilde{C}$ fits to the hypothesis of Lemma \ref{lem-arithmetic on coefficients}, and its full minors of consecutive rows have full rank so that the Gauss elimination is promised to kill all terms with $\alpha\prec\bar{m}_{k-1}$ for rows with large enough indexes $m$. Choose the smallest such $m$ as $m_k$ and resulted $H_k^{m_k}$ as $g_k$ (uniformed to be monic if necessary). The symmetry of $g_k$ in $x_{k+1},\dots,x_n$ is due to that of $H_k$. Hence we are done with the induction step and the theorem follows.
     	  	\end{proof}
      	  	\begin{remark}
      	  	For $p$ larger than the denominators of any multipliers used in the Gauss elimination, the proof works over $\F_p$ as well to establish Proposition \ref{prop-finite nullstellensatz application}. Our algorithm is a specialization of Noether normalization as in Proposition \ref{prop-leading power in Noether normalization}.\end{remark}
        	
           By symmetry of roots, for any main branch with distinct indexes $i_1,\cdots,i_{n-1}$, the same proof above works for the alphabetical order $x_{i_1}>x_{i_2}>\cdots>x_{i_{n-1}}>x_j$ in which $\{j\}=\{1,\cdots,n\}\smallsetminus\{i_1,\cdots,i_{n-1}\}$.
           \begin{cor}\label{cor-main branches}
    	    For any $J=\langle H_{n-1}(x_{i_1}),\cdots,H_{1}(x_{i_{n-1}})\rangle$ with $i_1,\cdots,i_{n-1}$ all distinct, there exist $m_k\in\Z_+$ such that $x_{i_k}^{m_k}\in LM(J)$ over $\Q$, for $k=1,\cdots,n-1$.
            \end{cor} 
             This proves Theorem \ref{thm-general branches} for all main branches. Under the rearranged alphabetical order, the proof of Proposition \ref{prop-main branch} works regardless of choice of derivatives, i.e.
             \begin{cor}\label{cor-derivative choice neglected}
             	For any $1\leq j_1<\cdots<j_k\leq n-1$ and $1\leq i_1,\dots, i_k\leq n$ distinct, there are $m_l\in\Z_+, l=1,\dots, k$ such that $x_{i_l}^{m_l}\in LM(\langle H_{n-j_1}(x_{i_1}),\dots,H_{n-j_k}(x_{i_k})\rangle)$ over $\Q$. 
             \end{cor}
             
             Complying with Proposition \ref{prop-variables in Noether normalization}, we have $\langle H_{n-j_1}(x_{i_1}),\dots,H_{n-j_k}(x_{i_k})\rangle\cap \Q[u]=0$ for $u=\{x_1,\dots,x_n\}\smallsetminus\{x_{i_1},\dots,x_{i_{k}}\}$. Hence by Cohen-Macaulayness, 
             \[ht(\langle H_{n-j_1}(x_{i_1}),\dots,H_{n-j_k}(x_{i_k})\rangle)=n-\dim(\Q[x_1,\dots,x_n]/I)=n-(n-k)=k,\]
             where $ht()$ denotes the height of an ideal. In other words, $H_{n-j_1}(x_{i_1}),\dots,H_{n-j_k}(x_{i_k})$ form a regular sequence which also follows from Corollary \ref{cor-main branches}.
             
             For general branches defined by $J=\langle H_{n-1}(x_{i_1}),\cdots,H_{1}(x_{i_{n-1}})\rangle$ with $i_1,\dots,i_{n-1}$ not necessarily distinct, we may still obtain results as of Corollary \ref{cor-main branches} through linearized Noether normalization as $(4)$ of Proposition \ref{prop-leading power in Noether normalization}.
             If the number of distinct indexes occurring is $k\leq n-1$, by symmetry we may assume $\{i_1,\dots,i_{n-1}\}=\{1,\dots,k\}$ so that our algorithm in the proof of Proposition \ref{prop-main branch} proceeds as well. By Corollary \ref{cor-derivative choice neglected} we obtain $x_1^{m_1},\dots,x_k^{m_k}\in LM(J)$, say by working on the sub-ideal $J_0=\langle H_{n-{j_1}}(x_1),\dots,H_{n-j_k}(x_k)\rangle$. For $u_0=\{k+1,\dots,n\}$ we have $J_0\cap\Q[u_0]=0$ and $ht(J_0)=k$. 
             
             Let $l_0=\{j_1,\dots,j_k\}$. For any $j\notin l_0$, if we can show that the intersection between $J_1=J_0+\langle H_{n-j}(x_{i_j})\rangle$ and $\Q[u_0]$ is not zero, then by Proposition \ref{prop-variables in Noether normalization}, $ht(J_1)=k+1$ and $J_1\cap\Q[u_1]=0$ for some $u_1\subset u_0$ of size $n-k-1$. Applying Proposition \ref{prop-leading power in Noether normalization} (modulo an isomorphism) we are guaranteed to have for some $i_{k+1}\in u_0$ such that $x_{i_{k+1}}^{m_{k+1}}\in LM(J_1)$. 
             Subsequently update $l_1:=l_0\cup\{j\}$ and $J_2:=J_1+\langle H_{n-j'}(x_{i_{j'}})\rangle$ for any $j'\notin l_1$. By further investigating $J_1\cap\Q[u_1]$, we may determine whether the height of $J_2$ grows. If each iteration of the process raises the height by one, at the end we may conclude that $ht(J)=n-1$. We consolidate this hypothetical procedure using Lemma \ref{lem-arithmetic on coefficients} as follows.
             \begin{prop}\label{prop-general branches}
             	Any ideal $J=\langle H_{n-1}(x_{i_1}),\cdots,H_{1}(x_{i_{n-1}})\rangle$ has $ht(J)=n-1$. Consequently $\dim Z(J)=1$ and Theorem \ref{thm-general branches} follows. 
             \end{prop}
         \begin{proof}
         	With the notations above, we verify that $ht(J_1)\cap\Q[u_0]\neq 0$. Applying Lemma \ref{lem-arithmetic on coefficients} to $\alpha=(\alpha_1,\dots,\alpha_k,0,\dots,0)\in\N^n$ with confinement, we see that $(C_{\alpha,j}(m))$ is non-degenerate, for $M<m\leq M+N$ with $M\in\Z_+$ large enough and $N$ being the number of $\alpha$ which are not symmetric to each other. Write $H^m_{n-j}(x_{i_j})$ in the form below \begin{equation}\label{equation-reduced H in general}H^m_{n-j}(x_{i_j})=\sum_{(0,\dots,0)<\alpha\prec (m_1,\dots,m_k,0,\dots,0)}x^{\alpha}h_{\alpha,m}+r_{m}\mod J_{0},\end{equation} 
           where $h_{\alpha,m}$ and $r_m$ are polynomials symmetric in $u_0$. Then we similarly kill the terms only involving $x_{i_1},\dots,x_{i_k}$ by Gauss elimination. After the elimination, the residue terms involving $r_m$ do not vanish similarly because of non-degeneracy of $(C_{\alpha,j}(m))$ for $\alpha=(\alpha_1,\dots,\alpha_{k+1},0,\dots,0)$ by Lemma \ref{lem-arithmetic on coefficients}. Thus $J_1\cap\Q[u_0]\neq 0$ indeed and $ht(J_1)=k+1$.
           
           Now by (4) and (2) of Proposition \ref{prop-leading power in Noether normalization}, we can choose a linear transform $\varphi_1: \Q[x_1,\dots,x_n]\rightarrow\Q[x_1,\dots,x_n]$ such that $\tilde{x}_1^{m_1},\dots,\tilde{x}_{k+1}^{m_{k+1}}\in\varphi(J_{1})$ for some $m_j\in\Z_+$ and $\tilde{x}_j$ the image of $x_j$. Then guaranteed by Proposition \ref{prop-leading power in Noether normalization} and Lemma \ref{lem-arithmetic on coefficients} the iteration proceeds until at step $n-k$ when we choose a linear transform $\varphi_{n-k}:\Q[x_1,\dots,x_n]\rightarrow\Q[x_1,\dots,x_n]$ such that $\varphi_{n-k}(J)\cap\Q[u_{n-k-1}]\neq 0$ for some $u_{n-k-1}\subset\{x_1,\dots,x_{n}\}$ of size two. Then we see $ht(J)=n-1$ and $\dim(Z(J))=1$, which proves Theorem \ref{thm-general branches}.
         \end{proof}
     
         \begin{proof}[Proof of Theorem \ref{thm-arbitrary derivatives}]
         	Guaranteed by the non-degeneracy of coefficients matrices as in Lemma \ref{lem-arithmetic on coefficients}, the above proof works for ideals generated by Hasse derivatives of not necessarily distinct degrees. The similar process of Gauss elimination and linear transforms promises that \[ht(f^{(j_1)}(x_{i_1}),\dots,f^{(j_{n-1})}(x_{i_{n-1}}))=n-1,\]  
         	for any $n-1$ arbitrary distinct pairs $(i_k,j_k)$ with $1\leq i_{k}\leq n$ and $1\leq j_k\leq n-1$.
         \end{proof}

   \medskip

 \end{document}